\newtheorem{theorem}{Theorem}[section]
\newtheorem{proposition}[theorem]{Proposition}
\newtheorem{lemma}[theorem]{Lemma}
\newtheorem{corollary}[theorem]{Corollary}
\newtheorem{example}[theorem]{Example}
\begin{document}

\title{Restricted Linear Constrained Minimization of quadratic functionals}
\author{Dimitrios Pappas\\Department of Statistics,\\Athens University of
Economics and Business,\\76 Patission Str, 10434, Athens, Greece
\\(dpappas@aueb.gr, pappdimitris@gmail.com)} \maketitle
\begin{abstract}
In this work a linearly constrained minimization of a positive
semidefinite quadratic functional is examined. Our results are
concerning infinite dimensional real Hilbert spaces, with a singular
positive operator related to the functional, and considering as
constraint a singular operator. The difference between the proposed
minimization and previous work on this problem, is that it is
considered for all vectors perpendicular to the kernel of the
related operator or matrix.
\end{abstract}
\textit{Keywords}: Quadratic functional, Constrained Optimization,
Moore-Penrose inverse , Restricted Optimization.
\\\textit{2010 Mathematics Subject Classification}: 47A05, 47N10, 15A09.
\section{Introduction}
\label{intro} The quadratic programming problem with equality
constraints is one of the basic problems in optimization, in both
the finite and the infinite dimensional case. The general problem is
to locate from within a given subset of a vector space the
particular vector which minimizes a given functional. In this case,
the subset of vectors is defined  by a set of linear constraint
relations and the functional is quadratic. In a classical book of
Optimization Theory by Luenberger \cite{luen}, various similar
optimization problems are presented, for both finite and infinite
dimensions.\\In the field of applied mathematics, a strong interest
is shown in applications of the generalized inverse of matrices or
operators. Generalized inverses can be used whenever a matrix/
operator is singular, in many fields of both computational and also
theoretical aspects. An application of the Moore-Penrose inverse in
the finite dimensional case, is the minimization of a positive
definite quadratic functional under linear constraints, presented in
Manherz and Hakimi \cite{hak}.\\The problem studied in their work in
the following:$$ minimize f(x) = \langle x,Qx\rangle + \langle
p,x\rangle + a,  x\in S $$  where $S = \{x:Ax = b\}$ and $Q$ is a
positive definite matrix.\\In this work we will extend this result
for positive semidefinite matrices or operators acting on infinite
dimensional real Hilbert spaces. Since in this case the operator
studied is singular, the proposed minimization is attained for the
vectors perpendicular to the kernel of the operator (or matrix) of
the quadratic functional.
\section{Preliminaries and notation}The notion of the
generalized inverse of a matrix was first introduced by H. Moore in
1920, and again by R. Penrose in 1955. These two definitions are
equivalent and the generalized inverse of an operator or matrix is
also called the Moore- Penrose inverse. It is known that when $T$ is
singular, then its unique generalized inverse $T^{\dagger}$ (known
as the Moore- Penrose inverse) is defined. In the case when $T$ is a
real $r\times m$ matrix, Penrose showed that there is a unique
matrix satisfying the four Penrose equations, called the generalized
inverse of $T$, noted by $T^\dagger$. \noindent The generalized
inverse, known as Moore-Penrose inverse, of an bounded linear
operator $T$ with closed range, is the unique operator satisfying
the following four conditions:
\begin{equation}\label{eq-MooPenr}
   TT^\dagger=(TT^\dagger)^*,\qquad T^\dagger T=(T^\dagger T)^*,\qquad
   TT^\dagger T=T,\qquad T^\dagger TT^\dagger =T^\dagger
\end{equation}
where $T^*$ denotes the adjoint operator of $T$.
\\In what follows, we consider
$\mathcal{H}$ a separable infinite dimensional Hilbert space, $
B(\mathcal{H})$ denotes the set of all bounded operators on
$\mathcal{H} $ and all operators mentioned are supposed to have
closed range. In addition, $\mathcal{R}(T)$ will denote the range of
an operator $T$, and $\mathcal{N}(T)$ will denote its kernel.\\It is
easy to see that $\mathcal{R}(T^\dagger)=\mathcal{N}(T)^\bot$,
$TT^\dagger$ is the orthogonal projection of $\mathcal{H}$ onto
$\mathcal{R}(T)$, denoted by $P_{T}$, and that $T^\dagger T$ is the
orthogonal projection of $\mathcal{H}$ onto $\mathcal{N}(T)^\bot =
\mathcal{R}(T^*)$ noted by $P_{T^*}$. It is well known that
$\mathcal{R}(T^\dagger)=\mathcal{R}(T^*)$.\\It is also known that
$T^\dagger$ is bounded if and only if $T$ has a closed range.\\If
$T$ has a closed range and  commutes with $T^\dagger$, then $T$ is
called an EP operator. EP operators constitute a wide class of
operators which includes the self adjoint operators, the normal
operators and the invertible operators.
\\Let us consider the equation $Tx=b,T \in B(\mathcal{H})$, where $T$ is singular. If $b\notin R(T)$, then
the equation has no solution. Therefore, instead of trying to solve
the equation $\|Tx-b\|=0$, we may look for a vector $u$ that
minimizes the norm $\|Tx-b\|$. Note that the vector $u$ is unique.
 In this case we consider the equation $Tx=P_{R(T)}b$, where
$P_{R(T)}$ is the orthogonal projection on $\mathcal{R}(T)$.
\\The following two propositions can be found in \cite{groe} and hold for operators and matrices:

\begin{proposition}\label{p1}
Let $T \in \mathbb{R}^{r\times m}$ and $b\in \mathbb{R}^{r}, b\notin
R(T)$. Then, for $u\in \mathbb{R}^{m}$, the following are
equivalent:
\begin{enumerate}
\item [(i)]$Tu=P_{R(T)}b$
\item [(ii)]$\|Tu-b\| \leq\parallel Tx-b\|, \forall x \in \mathbb{R}^{m}$
\item [(iii)]$T^*Tu=T^*b$

\end{enumerate}
\end{proposition}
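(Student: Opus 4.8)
The plan is to run the chain of equivalences through the orthogonal projection theorem, exploiting two standard facts: that $R(T)$ is a closed subspace (automatic in the matrix case, and assumed throughout in the operator case), so every $b$ has a unique best approximation $P_{R(T)}b$ from $R(T)$, characterized by the orthogonality condition $b - P_{R(T)}b \in R(T)^\bot$; and that $R(T)^\bot = \mathcal{N}(T^*)$. The guiding observation is that, as $x$ ranges over $\mathbb{R}^m$, the vector $Tx$ ranges over all of $R(T)$, so each of (i), (ii), (iii) is merely a reformulation of the single statement that $Tu$ is the point of $R(T)$ nearest to $b$.

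First I would prove $(i)\Leftrightarrow(iii)$. By the characterization of the orthogonal projection onto the closed subspace $R(T)$, the identity $Tu = P_{R(T)}b$ holds if and only if $b - Tu \in R(T)^\bot$. Since $R(T)^\bot = \mathcal{N}(T^*)$, this is in turn equivalent to $T^*(b - Tu) = 0$, that is, to $T^*Tu = T^*b$, which is exactly (iii).

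Next I would prove $(i)\Rightarrow(ii)$. Assuming $Tu = P_{R(T)}b$, write, for an arbitrary $x \in \mathbb{R}^m$,
\[
Tx - b = (Tx - Tu) + (Tu - b).
\]
Here $Tx - Tu \in R(T)$, whereas $Tu - b = -(b - P_{R(T)}b) \in R(T)^\bot$, so the two summands are orthogonal and the Pythagorean identity yields $\|Tx - b\|^2 = \|Tx - Tu\|^2 + \|Tu - b\|^2 \geq \|Tu - b\|^2$, which is (ii).

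Finally, for $(ii)\Rightarrow(i)$: condition (ii) says that $Tu$ attains $\min\{\|y - b\| : y \in R(T)\}$, and by the projection theorem this minimum is attained at the unique point $y = P_{R(T)}b$; since $Tu \in R(T)$, we conclude $Tu = P_{R(T)}b$, which is (i). I do not anticipate a genuine obstacle here: the only point requiring care is the repeated appeal to the closedness of $R(T)$, which is what legitimizes both the existence and uniqueness of the best approximation and the equality $R(T)^\bot = \mathcal{N}(T^*)$ (rather than with the closure of $R(T)$); in the stated matrix setting this is automatic, and in the operator version it is precisely the standing closed-range hypothesis.
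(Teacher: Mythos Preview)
Your argument is correct and is the standard one: you reduce everything to the best-approximation characterization of the orthogonal projection onto the closed subspace $R(T)$, together with $R(T)^\bot=\mathcal{N}(T^*)$. Each of the three implications is handled cleanly, and your remark about the closed-range hypothesis is exactly the right caveat for the operator setting.

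There is nothing to compare against, however: the paper does not supply its own proof of this proposition. It is stated without proof and attributed to Groetsch \cite{groe}, so your write-up stands on its own as a complete justification where the paper merely cites the result.
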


Let $\mathbb{B}=\{u\in \mathbb{R}^{m}| T^*Tu=T^*b\}$. This set of
solutions is closed and convex, therefore, it has a unique vector
with minimal norm. In the literature (eg. Groetsch \cite{groe}),
$\mathbb{B}$ is known as the set of the generalized solutions.

\begin{proposition}\label{p2}
Let $T \in \mathbb{C}^{r\times m}$ and $b\in \mathbb{C}^{r}, b\notin
R(T)$, and the equation $Tx=b$. Then, if $T^\dag$ is the generalized
inverse of $T$, we have that $T^\dag b = u$, where $u$ is the
minimal norm solution defined above.
\end{proposition}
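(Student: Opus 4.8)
The plan is to verify two things: that $T^\dagger b$ belongs to the solution set $\mathbb{B}=\{u: T^*Tu=T^*b\}$, and that among all elements of $\mathbb{B}$ it is the one of smallest norm; by the definition of $u$ this forces $T^\dagger b=u$.

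First I would check that $x=T^\dagger b$ satisfies the normal equations $T^*Tx=T^*b$ appearing in Proposition~\ref{p1}(iii). Using the identity $TT^\dagger=P_{R(T)}$ from Section~\ref{intro} (the preliminaries), one has $T^*TT^\dagger b=T^*\bigl(TT^\dagger b\bigr)=T^*P_{R(T)}b$. Since $\mathcal{N}(T^*)=\mathcal{R}(T)^\bot$, the operator $T^*$ annihilates $(I-P_{R(T)})b$, hence $T^*b=T^*P_{R(T)}b$. Combining the two displays gives $T^*TT^\dagger b=T^*b$, so $T^\dagger b\in\mathbb{B}$ (and, by Proposition~\ref{p1}, it already realizes $\min_x\|Tx-b\|$).

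Second, I would describe $\mathbb{B}$ explicitly and use orthogonality. From $\langle T^*Tx,x\rangle=\|Tx\|^2$ one gets $\mathcal{N}(T^*T)=\mathcal{N}(T)$, so the (nonempty, by the previous paragraph) affine solution set is $\mathbb{B}=T^\dagger b+\mathcal{N}(T)$. On the other hand $T^\dagger b\in\mathcal{R}(T^\dagger)=\mathcal{N}(T)^\bot$, so for every $z\in\mathcal{N}(T)$ the Pythagorean identity yields $\|T^\dagger b+z\|^2=\|T^\dagger b\|^2+\|z\|^2\ge\|T^\dagger b\|^2$, with equality only when $z=0$. Thus $T^\dagger b$ is the unique minimal-norm element of $\mathbb{B}$, i.e. $T^\dagger b=u$.

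I do not expect a genuine obstacle: the argument is essentially one orthogonal decomposition, and its only inputs are the two standard facts $\mathcal{N}(T^*T)=\mathcal{N}(T)$ and $\mathcal{R}(T^\dagger)=\mathcal{N}(T)^\bot$, both recorded in Section~2. It is worth remarking that the hypothesis $b\notin\mathcal{R}(T)$ is not actually used — it merely motivates passing to the least-squares formulation — so the same proof also covers the consistent case $b\in\mathcal{R}(T)$.
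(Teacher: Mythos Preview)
Your argument is correct and is the standard one: verify $T^\dagger b\in\mathbb{B}$ via $TT^\dagger=P_{R(T)}$ and $T^*P_{R(T)}=T^*$, then use $\mathcal{R}(T^\dagger)=\mathcal{N}(T)^\bot$ together with Pythagoras on the affine set $\mathbb{B}=T^\dagger b+\mathcal{N}(T)$. Your closing remark that the hypothesis $b\notin\mathcal{R}(T)$ plays no role in the proof is also accurate.

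There is nothing to compare against, however: the paper does not supply its own proof of Proposition~\ref{p2} but simply quotes it from Groetsch~\cite{groe}. So your write-up is not an alternative route but rather a self-contained justification of a result the paper takes as known.
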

This property has an application in the problem of minimizing a
symmetric positive definite quadratic functional subject to linear
constraints, assumed consistent.\\As mentioned above, EP operators
include normal and self adjoint operators, therefore the operator
$T$ in the quadratic form studied in this work is EP. An operator
$T$ with closed range is called EP if
$\mathcal{N}(T)=\mathcal{N}(T^*)$. It is easy to see that
\begin{equation}
\label{def} T \text{
EP}\Leftrightarrow\mathcal{R}(T)=\mathcal{R}(T^*)\Leftrightarrow\displaystyle{\mathcal{R}(T)\mathop{\oplus}^{\perp}\mathcal{N}(T)=\mathcal{H}}\Leftrightarrow
TT^{\dag}=T^{\dag}T.
\end{equation}We take advantage of the fact that EP operators
have a simple canonical form $T=U_1(A_1\oplus 0)U_1^*$ according to
the decomposition $\mathcal{H}= \mathcal{R}(T)\oplus
\mathcal{N}(T)$. Indeed an EP operator $T$ has the following simple
matrix form, $T = \left[ {\begin{array}{*{20}c}
   A & 0  \\
   0 & 0  \\
\end{array}} \right]$,
where the operator $A:\mathcal{R}(T)\rightarrow \mathcal{R}(T)$ is
invertible, and its generalized inverse $T^\dagger$ has the form
$T^\dagger= \left[ {\begin{array}{*{20}c}
   A^{-1} & 0  \\
   0 & 0  \\
\end{array}} \right]$ (see \cite{campb}, \cite{Driv} ).\\In the finite dimensional case, if  $T\in\mathbb{R}^{n\times n}$ and $\text
{rank}(T)=r$,  the unitary matrix $U$ has the form $[U_1, U_2]$
where $U_1\in \mathbb{R}^{n\times r} $ is an orthonormal basis for
the column space of $T$  and $U_2$ is an orthonormal basis for the
null space of the matrix $T$. In Matzakos and Pappas \cite{pap} an
algorithm is presented for the symbolic computation of the
Moore-Penrose inverse for EP matrices and the corresponding
factorization $T=U(A_1\oplus 0)U^*$ is presented.\\As mentioned
above, a necessary condition for the existance of a bounded
generalized inverse is that the operator has closed range.
Nevertheless, the range of the product of two operators with closed
range is not always closed. In Izumino \cite{Izum} an equivalent
condition is given, using orthogonal projections :
\begin{proposition} \label{pr1} Let $A$ and $B$ be operators with closed range. Then, $AB$ has closed
range if and only if $A^\dagger ABB^\dagger$ has closed range.
\end{proposition}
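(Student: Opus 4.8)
The plan is to reduce the statement to the elementary fact that, since $A$ has closed range, its restriction $A|_{\mathcal N(A)^\perp}\colon \mathcal N(A)^\perp\to\mathcal R(A)$ is a topological isomorphism, hence a homeomorphism that carries closed subspaces to closed subspaces and back. First I would record the relevant Penrose identities: $A^\dagger A=P_{A^*}$ is the orthogonal projection onto $\mathcal N(A)^\perp=\mathcal R(A^*)$, and $BB^\dagger=P_B$ is the orthogonal projection onto $\mathcal R(B)$ (closed by hypothesis), so $\mathcal R(BB^\dagger)=\mathcal R(B)$. Using $\mathcal R(ST)=S(\mathcal R(T))$ twice, this gives
\[
\mathcal R(A^\dagger A BB^\dagger)=A^\dagger A\bigl(\mathcal R(BB^\dagger)\bigr)=A^\dagger A\bigl(\mathcal R(B)\bigr)\subseteq\mathcal R(A^\dagger A)=\mathcal N(A)^\perp,
\]
and likewise $\mathcal R(AB)=A\bigl(\mathcal R(B)\bigr)$ because $\mathcal R(B)=\mathcal R(BB^\dagger)=BB^\dagger\mathcal H$.

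Next I would show $\mathcal R(AB)=A\bigl(\mathcal R(A^\dagger A BB^\dagger)\bigr)$. From the Penrose identity $AA^\dagger A=A$ we get $A(I-A^\dagger A)=0$, so for every $v\in\mathcal R(B)$, decomposing $v=A^\dagger Av+(I-A^\dagger A)v$ along $\mathcal N(A)^\perp\oplus\mathcal N(A)$ yields $Av=A(A^\dagger Av)$. Hence $A(\mathcal R(B))=A\bigl(A^\dagger A\,\mathcal R(B)\bigr)=A\bigl(\mathcal R(A^\dagger A BB^\dagger)\bigr)$, invoking the identity of the previous step.

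To finish, since $\mathcal R(A)$ is closed, $A|_{\mathcal N(A)^\perp}$ is a bounded linear bijection of $\mathcal N(A)^\perp$ onto $\mathcal R(A)$, hence by the open mapping theorem it has a bounded inverse (in fact the restriction of $A^\dagger$ to $\mathcal R(A)$); that is, it is a homeomorphism between two closed subspaces of $\mathcal H$. Because $\mathcal R(A^\dagger A BB^\dagger)$ lies inside $\mathcal N(A)^\perp$, its image $A\bigl(\mathcal R(A^\dagger A BB^\dagger)\bigr)=\mathcal R(AB)$ is closed in $\mathcal R(A)$, equivalently in $\mathcal H$, if and only if $\mathcal R(A^\dagger A BB^\dagger)$ is itself closed; both implications come out simultaneously.

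The routine part is the projection bookkeeping; the one point requiring care is the appeal to the open mapping theorem to see that $A$ is bounded below on $\mathcal N(A)^\perp$, together with the mild caution that pushing a closed subspace forward through $A$ is only well behaved because one has first restricted to $\mathcal N(A)^\perp$, where $A$ is injective. I would also note that the closed-range hypothesis on $B$ enters exactly once, to identify $\mathcal R(B)$ with $\mathcal R(BB^\dagger)$.
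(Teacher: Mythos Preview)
Your argument is correct. Note, however, that the paper does not actually prove this proposition: it is quoted from Izumino \cite{Izum} without proof and used only as a black-box tool later on (in Lemma~\ref{l1} and Theorem~\ref{th4}). So there is no proof in the paper to compare against.

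That said, your route is a clean, self-contained one: reducing both ranges to subsets of $\mathcal N(A)^\perp$ and $\mathcal R(A)$ respectively, and then invoking that $A|_{\mathcal N(A)^\perp}\colon\mathcal N(A)^\perp\to\mathcal R(A)$ is a linear homeomorphism (equivalently, that $A^\dagger|_{\mathcal R(A)}$ is its bounded inverse) so that closedness is preserved in both directions. The identification $\mathcal R(AB)=A\bigl(\mathcal R(A^\dagger ABB^\dagger)\bigr)$ via $AA^\dagger A=A$ is exactly the right link between the two operators. Your remark about where the closed-range hypothesis on $B$ is used is also accurate: without it, $B^\dagger$ is unbounded and the projection identity $BB^\dagger=P_B$ onto a \emph{closed} $\mathcal R(B)$ fails.
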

 We will use the above theorem to prove the existence
of the Moore- Penrose inverse used in our work.
\section{The Generalized inverse in linear constrained minimization}
The Generalized inverse of an operator or matrix plays a crucial
role in many optimization problems where minimal norm solutions are
studied. Ben-Israel and Greville (\cite {Israel}  chapter 3), and
Campbell and Meyer (\cite {campb} chapter 3.6), have considered the
constrained least- squares problem
$$minimize \quad ||Ax -b || \qquad under \quad Bx =d
$$ with $ A, B, b, d, x$ all complex.\\ In Chen \cite{chen},
a similar problem is considered  using partial ordering induced by
cones, where the generalized inverse plays a fundamental role.\\The
purpose of this paper is to extend the work done on this subject by
Manherz and Hakimi \cite{hak}. At first we will extend the finite
dimensional results in the case of positive definite operators, and
then we will examine the case when the operator is singular.
\subsection{Positive Definite Quadratic Forms}
Let $Q$ be a positive definite symmetric matrix. The following
theorem can be found in Manherz and Hakimi \cite{hak} :
\begin{theorem}\label{th3}Let Q be positive definite. Consider the equation $Ax = b$ .\\If the set $S = \{x:Ax = b\}$ is not empty, the the
problem : $$ minimize \quad \Phi(x) = \langle x,Qx\rangle + \langle
p,x\rangle + a,  x\in S $$ and p, a arbitrary has the unique
solution
$$x= Q^{-\frac{1}{2}}(AQ^{-\frac{1}{2}})^\dagger (\frac{1}{2}AQ^{-1}p + b) - \frac{1}{2} Q^{-1} p$$
\end{theorem}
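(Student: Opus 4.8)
The plan is to reduce the constrained problem to an unconstrained one by a change of variables that ``whitens'' the quadratic form, and then invoke Proposition~\ref{p2} to write the minimal-norm least-squares solution via the Moore–Penrose inverse. First I would complete the square: writing $y = Q^{1/2}x + \tfrac12 Q^{-1/2}p$ (legitimate since $Q$ is positive definite, hence $Q^{1/2}$ and $Q^{-1/2}$ exist and are bounded with bounded inverses), one checks by direct expansion that $\Phi(x) = \langle x, Qx\rangle + \langle p, x\rangle + a = \|y\|^2 + c$, where $c = a - \tfrac14\langle p, Q^{-1}p\rangle$ is a constant independent of $x$. Thus minimizing $\Phi$ over $S$ is equivalent to minimizing $\|y\|^2$ over the image of $S$ under this affine map.

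Next I would rewrite the constraint $Ax = b$ in terms of $y$. Since $x = Q^{-1/2}y - \tfrac12 Q^{-1}p$, the constraint becomes $A Q^{-1/2} y = b + \tfrac12 A Q^{-1} p$. Set $\tilde A = A Q^{-1/2}$ and $\tilde b = b + \tfrac12 A Q^{-1}p$. Now the problem is precisely: minimize $\|y\|$ subject to $\tilde A y = \tilde b$. Because $S$ is nonempty, this linear system is consistent, so $\tilde b \in \mathcal{R}(\tilde A)$ and $\mathcal{R}(\tilde A)$ is closed; hence by Proposition~\ref{p2} the unique minimal-norm solution is $y = \tilde A^\dagger \tilde b = (AQ^{-1/2})^\dagger\bigl(b + \tfrac12 AQ^{-1}p\bigr)$. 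Among all $y$ satisfying the constraint, this one has smallest norm, and since $\|y\|^2 = \Phi(x) - c$ with $c$ constant, the corresponding $x$ is the unique minimizer of $\Phi$ on $S$. Substituting back, $x = Q^{-1/2}y - \tfrac12 Q^{-1}p = Q^{-1/2}(AQ^{-1/2})^\dagger(\tfrac12 AQ^{-1}p + b) - \tfrac12 Q^{-1}p$, which is the claimed formula.

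The one point that needs care — and which I expect to be the main obstacle — is the justification that the minimal-norm $y$ actually yields the minimizer of $\Phi$, i.e. that the affine change of variables sets up a genuine bijection between $S$ and the affine solution set $\{y : \tilde A y = \tilde b\}$ respecting the objective. This is where the completing-the-square identity must be verified cleanly: one must confirm that the cross terms combine so that no linear-in-$y$ remainder survives, which uses symmetry of $Q$ (hence of $Q^{-1/2}$) in an essential way, e.g. $\langle Q^{-1/2}y, p\rangle = \langle y, Q^{-1/2}p\rangle$. A secondary subtlety in the infinite-dimensional setting is ensuring $(AQ^{-1/2})^\dagger$ is bounded; this follows from Proposition~\ref{pr1} applied with the closed-range operators $A$ and $Q^{-1/2}$ — in fact since $Q^{-1/2}$ is invertible, $\mathcal{R}(AQ^{-1/2}) = \mathcal{R}(A)$ is closed automatically — so the generalized inverse in the formula is well defined. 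With these in hand, uniqueness of the minimizer is inherited from the uniqueness of the minimal-norm solution in Proposition~\ref{p2}.
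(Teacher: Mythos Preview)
Your approach is correct and is essentially the same as the one the paper adopts. The paper does not actually give a proof of this particular statement---it is quoted from Manherz and Hakimi \cite{hak}---but the identical completing-the-square/change-of-variables argument you outline is precisely what the paper carries out in detail in the proof of Theorem~\ref{th4} (the semidefinite generalization), with $Q^{-1/2}$ replaced by $X^\dagger$. Your observation that $\mathcal{R}(AQ^{-1/2})=\mathcal{R}(A)$ when $Q^{-1/2}$ is invertible is exactly the content of Lemma~\ref{l1}.
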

A generalization of the above theorem for infinite dimensional
Hilbert spaces, is by replacing $Q$ with an invertible positive
operator $T$. The operator $A$ must be singular, otherwise this
problem is trivial. In the case of operators instead of matrices,
the proof is similar to Manherz and Hakimi \cite{hak}, but the
existence of a bounded Moore- Penrose inverse is not trivial like in
the finite dimensional case.
\begin{lemma} \label{l1} Let $T \in \mathcal{B}(\mathcal{H})$ be an invertible positive
operator with closed range
 and  $A\in \mathcal{B}(\mathcal{H})$
singular with closed range.\\Then, the range of $AT^{-\frac{1}{2}}$
is closed , where $T^{-\frac{1}{2}}$ denotes the operator
$(T^{\frac{1}{2}})^{-1} $.
\begin{proof}: Using proposition \ref
{pr1} we can see that $$A^\dagger A
(T^{-\frac{1}{2}})(T^{-\frac{1}{2}})^\dagger =
P_{A^*}(T^{-\frac{1}{2}}) (T^{\frac{1}{2}}) = P_{A^*}$$ which has
closed range.
\end{proof}
\end{lemma}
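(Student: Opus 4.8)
The plan is to reduce the assertion to Izumino's criterion, Proposition~\ref{pr1}, applied to the product $AT^{-\frac{1}{2}}$ with factors $A$ and $B:=T^{-\frac{1}{2}}$. First I would verify the hypotheses of that proposition. The factor $A$ has closed range by assumption. For the factor $B$, observe that since $T$ is a bounded positive invertible operator, so is $T^{\frac{1}{2}}$, whence $B=(T^{\frac{1}{2}})^{-1}$ is a bounded invertible operator; in particular $\mathcal{R}(B)=\mathcal{H}$, which is closed (so, incidentally, the "closed range" hypothesis on $T$ is automatic). Thus Proposition~\ref{pr1} applies and tells us that $\mathcal{R}(AT^{-\frac{1}{2}})$ is closed if and only if $\mathcal{R}\bigl(A^\dagger A\,B B^\dagger\bigr)$ is closed.

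The second step is to simplify $A^\dagger A\,BB^\dagger$. Because $B=T^{-\frac{1}{2}}$ is invertible, its ordinary inverse $T^{\frac{1}{2}}$ trivially satisfies all four Penrose equations, so $B^\dagger=T^{\frac{1}{2}}$ and therefore $BB^\dagger=T^{-\frac{1}{2}}T^{\frac{1}{2}}=I$. Combined with the identity $A^\dagger A=P_{A^*}$, the orthogonal projection onto $\mathcal{N}(A)^\perp=\mathcal{R}(A^*)$ recalled in the preliminaries, this gives $A^\dagger A\,BB^\dagger=P_{A^*}$.

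Finally, $P_{A^*}$ is an orthogonal projection, and its range $\mathcal{N}(A)^\perp$ is the orthogonal complement of a closed subspace, hence closed. Applying the nontrivial ("if") direction of Proposition~\ref{pr1} then yields that $\mathcal{R}(AT^{-\frac{1}{2}})$ is closed, which is the claim. I do not anticipate a genuine obstacle here: the one point that deserves to be stated explicitly is that for an invertible operator the Moore--Penrose inverse coincides with the ordinary inverse — this is what collapses the middle product $BB^\dagger$ to the identity — while everything else is bookkeeping with the projection identities from the preliminaries together with a single invocation of Proposition~\ref{pr1}.
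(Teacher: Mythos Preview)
Your argument is correct and follows exactly the paper's approach: apply Proposition~\ref{pr1} to the factors $A$ and $B=T^{-\frac{1}{2}}$, use $B^\dagger=T^{\frac{1}{2}}$ (since $B$ is invertible) and $A^\dagger A=P_{A^*}$ to reduce $A^\dagger A\,BB^\dagger$ to $P_{A^*}$, and observe that this projection has closed range. The only difference is that you spell out the verification of hypotheses and the identity $B^\dagger=B^{-1}$ more explicitly than the paper does.
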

Therefore, the expression of the above theorem in infinite
dimensional Hilbert spaces is the following:
\begin{theorem}\label{th3}Let $T\in B(\mathcal{H})$ be positive definite and the equation $Ax = b$ , where $A$ is singular.\\If the set $S = \{x:Ax = b\}$ is not empty, the the
problem : $$ minimize \quad \Phi(x) = \langle x,Tx\rangle + \langle
p,x\rangle + a,  x\in S $$ with $p, a \in \mathcal{H}$ arbitrary has
the unique solution
$$x= T^{-\frac{1}{2}}(AT^{-\frac{1}{2}})^\dagger (\frac{1}{2}AT^{-1}p + b) - \frac{1}{2} T^{-1} p$$
\end{theorem}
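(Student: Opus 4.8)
The strategy is to reduce the constrained minimization to a standard least‑norm problem solvable by Proposition~\ref{p2}, via the substitution that symmetrizes the quadratic form. Since $T$ is positive definite with closed range, $T^{\frac{1}{2}}$ exists, is positive definite with closed range, and is invertible; write $T^{-\frac{1}{2}}=(T^{\frac{1}{2}})^{-1}$. First I would complete the square: for $y=T^{\frac{1}{2}}x$ we have $\langle x,Tx\rangle+\langle p,x\rangle+a=\|y\|^{2}+\langle T^{-\frac{1}{2}}p,y\rangle+a=\bigl\|y+\tfrac{1}{2}T^{-\frac{1}{2}}p\bigr\|^{2}+\bigl(a-\tfrac14\|T^{-\frac12}p\|^{2}\bigr)$. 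Setting $z=y+\tfrac12 T^{-\frac12}p=T^{\frac12}x+\tfrac12T^{-\frac12}p$, minimizing $\Phi$ over $S$ is equivalent to minimizing $\|z\|^{2}$, hence $\|z\|$, over the image of $S$ under this affine bijection.

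**Rewriting the constraint.** Next I would express the constraint $Ax=b$ in the $z$ variable: $x=T^{-\frac12}\bigl(z-\tfrac12T^{-\frac12}p\bigr)=T^{-\frac12}z-\tfrac12T^{-1}p$, so $Ax=b$ becomes $AT^{-\frac12}z=b+\tfrac12AT^{-1}p$. Thus $z$ ranges over the (nonempty, by hypothesis on $S$) solution set of the linear equation $\widetilde A z=c$, where $\widetilde A=AT^{-\frac12}$ and $c=b+\tfrac12AT^{-1}p$. By Lemma~\ref{l1}, $\widetilde A=AT^{-\frac12}$ has closed range, so $\widetilde A^{\dagger}$ is bounded. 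Because $S\neq\varnothing$ the equation $\widetilde Az=c$ is consistent, i.e.\ $c\in\mathcal R(\widetilde A)$, so by Proposition~\ref{p2} (or directly, since the solution set $\{z:\widetilde A z=c\}$ is a closed affine subspace, hence a closed convex set with a unique element of minimal norm) the minimal‑norm solution is $z_{0}=\widetilde A^{\dagger}c=(AT^{-\frac12})^{\dagger}\bigl(b+\tfrac12AT^{-1}p\bigr)$, and it is unique. Since $z\mapsto\|z\|^{2}$ has a unique minimizer on this set, uniqueness transfers back through the bijection to give a unique minimizer $x$ of $\Phi$ on $S$.

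**Back‑substitution.** Finally I would substitute $z_{0}$ back: $x=T^{-\frac12}z_{0}-\tfrac12T^{-1}p=T^{-\frac12}(AT^{-\frac12})^{\dagger}\bigl(\tfrac12AT^{-1}p+b\bigr)-\tfrac12T^{-1}p$, which is exactly the claimed formula. One should also check that this $x$ indeed satisfies $Ax=b$, i.e.\ that $z_{0}$ solves $\widetilde A z=c$ and not merely the normal equations — but consistency of the system guarantees $\widetilde A\widetilde A^{\dagger}c=c$, so this is automatic.

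**Main obstacle.** The routine parts are the algebra of completing the square and the back‑substitution. The genuine point — already isolated as Lemma~\ref{l1} — is that $AT^{-\frac12}$ has closed range, which is what legitimizes using a \emph{bounded} Moore–Penrose inverse and invoking Proposition~\ref{p2} in the infinite‑dimensional setting; without closed range neither $\widetilde A^{\dagger}$ nor the existence of a minimal‑norm solution is guaranteed. The only other subtlety worth a sentence is that minimizing $\|z\|$ is equivalent to minimizing $\|z\|^{2}$, and that the affine substitution $x\leftrightarrow z$ is a bijection between $S$ and the solution set of $\widetilde A z=c$, so the unique minimizer on one side corresponds to the unique minimizer on the other.
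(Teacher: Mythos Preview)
Your proposal is correct and follows essentially the same approach as the paper: the paper's own proof simply observes that Lemma~\ref{l1} supplies the closed range of $AT^{-\frac12}$ (hence a bounded Moore--Penrose inverse) and defers all remaining details to the finite-dimensional argument of Manherz and Hakimi, which is precisely the complete-the-square substitution $z=T^{\frac12}x+\tfrac12T^{-\frac12}p$, reduction of $Ax=b$ to $AT^{-\frac12}z=b+\tfrac12AT^{-1}p$, and application of Proposition~\ref{p2} that you have spelled out. The only cosmetic difference is that the paper (in its analogous detailed proof of Theorem~\ref{th4}) phrases the completion of the square as adding the constant $\tfrac14\langle T^{-1}p,p\rangle$ to $\Phi$ rather than subtracting it from $\|z\|^{2}$, but the substance is identical.
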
\begin{proof} : Since the existence of a bounded generalized inverse for the operator $AT^{-\frac{1}{2}}$ is proved is Lemma \ref{l1}, the rest of the proof of the above theorem is similar to the finite
dimensional case presented in \cite{hak} and is omitted.\end{proof}
\subsection{Positive Semidefinite Quadratic Forms}
An interesting case to examine, is when the positive operator $T$ is
singular, that is, $T$ is positive semidefinite. In this case, since
$ \mathcal{N}(T) \neq\emptyset$,  we have that $\langle x,Tx\rangle
= 0$ for all $x \in \mathcal{N}(T)$ and so, when $\mathcal{N}(T)\cap
S \neq\emptyset$ this problem will take the form $$minimize \quad
\Phi(x) = \langle p,x\rangle + a, \quad for \quad x\in S \cap
\mathcal{N}(T)$$ which is usually treated using the simplex method,
when the space is finite dimensional.
\\A different approach in both the finite and infinite dimensional
case would be to look among the vectors $ x \in \mathcal{N}(T)^\perp
= \mathcal{R}(T^*) = \mathcal{R}(T)$ for a minimizing vector for
$\Phi(x) $. In other words, we will look for the minimum of
$\Phi(x)$ under the constraints $Ax = b, x \in
\mathcal{R}(T).$\\Using the fact that $T$ is an $EP$ operator, we
will make use of the following proposition that can be found in
Drivaliaris et al \cite{Driv}:
\begin{proposition}
\label{prop5} Let $T\in\mathcal{B}(\mathcal{H})$ with closed range.
Then the following are equivalent: \\i) $T$ is EP. \\ii) There exist
Hilbert spaces $\mathcal{K}_1$ and $\mathcal{L}_1$,
$U\in\mathcal{B}(\mathcal{K}_1\oplus \mathcal{L}_1,\mathcal{H})$
unitary and $A_1\in\mathcal{B}(\mathcal{K}_1)$ isomorphism such that
$$T=U(A_1\oplus 0)U^*.$$
\end{proposition}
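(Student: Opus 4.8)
The plan is to prove the two implications separately, leaning throughout on the characterizations of EP operators collected in \eqref{def}.

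For (i)$\Rightarrow$(ii): assuming $T$ is EP with closed range, \eqref{def} supplies the orthogonal splitting $\mathcal{H}=\mathcal{R}(T)\oplus^{\perp}\mathcal{N}(T)$. I would set $\mathcal{K}_1=\mathcal{R}(T)$ and $\mathcal{L}_1=\mathcal{N}(T)$, both closed subspaces and hence Hilbert spaces in their own right, and let $U\colon\mathcal{K}_1\oplus\mathcal{L}_1\to\mathcal{H}$ be the map $(k,\ell)\mapsto k+\ell$; since the splitting is orthogonal, $U$ is unitary and $U^*h=(P_{\mathcal{R}(T)}h,\,P_{\mathcal{N}(T)}h)$. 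The next step is to read off the block form of $U^*TU$: for $(k,\ell)\in\mathcal{K}_1\oplus\mathcal{L}_1$ one has $T(k+\ell)=Tk\in\mathcal{R}(T)$, whence $U^*TU(k,\ell)=(Tk,0)$, that is, $U^*TU=A_1\oplus 0$ with $A_1=T|_{\mathcal{R}(T)}\colon\mathcal{R}(T)\to\mathcal{R}(T)$. It remains to check that $A_1$ is an isomorphism: injectivity follows from $\mathcal{N}(T)\cap\mathcal{R}(T)=\{0\}$, surjectivity onto $\mathcal{R}(T)$ from the fact that any $y=Tx$ equals $Tk$ where $k$ is the $\mathcal{R}(T)$-component of $x$, and since $\mathcal{R}(T)$ is closed (hence complete) a bounded bijection of it has bounded inverse by the open mapping theorem.

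For (ii)$\Rightarrow$(i): given $T=U(A_1\oplus 0)U^*$ with $U$ unitary and $A_1$ an isomorphism, I would first note that $T^*=U(A_1^*\oplus 0)U^*$ and that $A_1^*$ is again an isomorphism, with $(A_1^*)^{-1}=(A_1^{-1})^*$. Then $x\in\mathcal{N}(T)$ precisely when the $\mathcal{K}_1$-component of $U^*x$ lies in $\mathcal{N}(A_1)=\{0\}$, that is, when $U^*x\in\{0\}\oplus\mathcal{L}_1$; running the identical computation with $A_1^*$ in place of $A_1$ shows $\mathcal{N}(T^*)=\{x:U^*x\in\{0\}\oplus\mathcal{L}_1\}$ as well, so $\mathcal{N}(T)=\mathcal{N}(T^*)$. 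Finally $\mathcal{R}(T)=U(\mathcal{K}_1\oplus\{0\})$ is closed because $A_1$ is surjective and $U$ is a homeomorphism, so $T$ has closed range and is therefore EP by \eqref{def}.

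The step that needs care is the passage from the EP hypothesis to block-diagonality in (i)$\Rightarrow$(ii): the vanishing of the off-diagonal corners of $U^*TU$ is exactly where ``EP'' is used, since for a general closed-range operator one has only the orthogonal splitting $\mathcal{H}=\mathcal{R}(T^*)\oplus^{\perp}\mathcal{N}(T)$ and $T$ need not be block-diagonal with respect to $\mathcal{R}(T)\oplus\mathcal{N}(T)$; the invertibility of $A_1$, in turn, is where the closed-range hypothesis enters, through the open mapping theorem. The converse implication is essentially mechanical bookkeeping with the unitary $U$.
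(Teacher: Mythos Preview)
Your proof is correct and follows essentially the same route as the paper's: the paper sketches only (i)$\Rightarrow$(ii), taking $\mathcal{K}_1=\mathcal{R}(T)$, $\mathcal{L}_1=\mathcal{N}(T)$, $U(x_1,x_2)=x_1+x_2$ and $A_1=T|_{\mathcal{R}(T)}$, exactly as you do. You add more detail (the open mapping argument for the invertibility of $A_1$) and also supply the converse (ii)$\Rightarrow$(i), which the paper omits.
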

We present a sketch of the proof for (1)$\Rightarrow$(2):
\begin{proof}:
Let $\mathcal{K}_1=\mathcal{R}(T)$, $\mathcal{L}_1=\mathcal{N}(T)$,
$U:\mathcal{K}_1\oplus\mathcal{L}_1\rightarrow\mathcal{H}$ with
$$U(x_1,x_2)=x_1+x_2,$$ for all $x_1\in\mathcal{R}(T)$ and
$x_2\in\mathcal{N}(T)$, and
$A_1=T|_{\mathcal{R}(T)}:\mathcal{R}(T)\rightarrow\mathcal{R}(T).$
Since $T$ is EP,
$\mathcal{R}(T)\mathop{\oplus}^{\perp}\mathcal{N}(T)=\mathcal{H}$
and thus $U$ is unitary. Moreover it is easy to see that
$U^*x=(P_Tx,P_{\mathcal{N}(T)}x),$ for all $x\in \mathcal{H}$. It is
obvious that $A_1$ is an isomorphism. A simple calculation shows
that
$$T=U(A_1\oplus 0)U^*.$$
\end{proof}
It is easy to see that when $T=U(A_1\oplus 0)U^*$ and $T$ is
positive, so is $A_1$, since $\langle x,Tx\rangle = \langle
x_1,A_1x_1\rangle, x_1 \in \mathcal{R}(T)$.
\\ In what follows, $T$
will denote a singular positive operator with a canonical form
$T=U(A_1\oplus 0)U^*$ , $R$ is the unique solution of the equation
$R^2 = A_1$ and $$X^\dagger =U\left[ {\begin{array}{cc}
   (A_1^{-1})^{\frac{1}{2}} & 0  \\
   0 & 0  \\
\end{array}} \right]U ^* =  U\left[ {\begin{array}{cc}
   R^{-1} & 0  \\
   0 & 0  \\
\end{array}} \right]U ^*$$\\
\begin{theorem}\label{th4}Let T be positive semidefinite and $X^2 = T$. Consider the equation $Ax = b$.\\If the set $S = \{x:Ax = b\}$ is not empty, then the
problem : $$ \text {minimize} \quad \Phi(x) = \langle x,Tx\rangle +
\langle p,x\rangle + a,  x\in S\cap \mathcal{N}(T)^\perp$$ and p, a
arbitrary has the unique solution
$$\hat{x}= X^\dagger(AX^\dagger)^\dagger (\frac{1}{2}AT^\dagger p + b) - \frac{1}{2} T^\dagger
p$$ assuming that the operator $P_{A^*}P_T$ has closed range.
\end{theorem}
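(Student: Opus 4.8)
The plan is to imitate the Manherz--Hakimi substitution, but with the singular square root $X$ playing the role of $Q^{1/2}$ and with everything confined to $\mathcal{R}(T)=\mathcal{N}(T)^\perp$. First I would record the identities that make the substitution work. Since $A_1$, and hence its positive square root $R$, is an isomorphism of $\mathcal{R}(T)$, the operator $X=U(R\oplus 0)U^*$ is self-adjoint with $X^2=T$, $\mathcal{R}(X)=\mathcal{R}(T)$, $\mathcal{N}(X)=\mathcal{N}(T)$; thus $X$ is EP, $(X^\dagger)^*=X^\dagger$, $XX^\dagger=X^\dagger X=P_T$, $(X^\dagger)^2=U(R^{-2}\oplus 0)U^*=T^\dagger$, and $TX^\dagger=X^2X^\dagger=X$. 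For $x\in S\cap\mathcal{N}(T)^\perp$ set $z=Xx$; then $z\in\mathcal{R}(T)$, $X^\dagger z=X^\dagger Xx=P_Tx=x$ and $AX^\dagger z=Ax=b$, so $x\mapsto z$ is a bijection from $S\cap\mathcal{N}(T)^\perp$ onto $\{z\in\mathcal{R}(T):AX^\dagger z=b\}$. Using $\langle x,Tx\rangle=\langle X^\dagger z,TX^\dagger z\rangle=\langle X^\dagger z,Xz\rangle=\langle XX^\dagger z,z\rangle=\|z\|^2$ (as $z\in\mathcal{R}(T)$) and $\langle p,x\rangle=\langle p,X^\dagger z\rangle=\langle X^\dagger p,z\rangle$, the functional becomes $\Phi(X^\dagger z)=\|z\|^2+\langle X^\dagger p,z\rangle+a$.

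Next I would complete the square. Since $X^\dagger p\in\mathcal{R}(X^\dagger)=\mathcal{R}(T)$, putting $w=z+\tfrac12 X^\dagger p$ keeps $w\in\mathcal{R}(T)$ and gives $\Phi(X^\dagger z)=\|w\|^2+a-\tfrac14\|X^\dagger p\|^2$, while the constraint $AX^\dagger z=b$ turns into $AX^\dagger w=b+\tfrac12 AX^\dagger X^\dagger p=b+\tfrac12 AT^\dagger p=:c$. So the problem is reduced to minimising $\|w\|^2$ over $w\in\mathcal{R}(T)$ subject to $(AX^\dagger)w=c$.

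To solve this I would invoke the generalized-inverse machinery. The operators $A$ and $X^\dagger$ both have closed range, so by Izumino's Proposition \ref{pr1} the product $AX^\dagger$ has closed range if and only if $A^\dagger A\,X^\dagger(X^\dagger)^\dagger=P_{A^*}P_T$ has closed range, which is precisely the standing hypothesis; hence $(AX^\dagger)^\dagger$ is bounded. By Proposition \ref{p2}, provided the reduced equation is consistent (equivalently $S\cap\mathcal{N}(T)^\perp\neq\emptyset$, which plays here the role that "$S\neq\emptyset$" plays in the positive definite case), its minimal-norm solution is $w^\star=(AX^\dagger)^\dagger c$, and $w^\star\in\mathcal{R}\big((AX^\dagger)^*\big)=\mathcal{R}(X^\dagger A^*)\subseteq\mathcal{R}(T)$. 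Since $w^\star$ lies in $\mathcal{R}(T)$ and is feasible, it also minimises $\|w\|^2$ over the restricted feasible set, and strict convexity of $w\mapsto\|w\|^2$ on the nonempty closed convex feasible set gives uniqueness. Translating back, $\hat x=X^\dagger z^\star=X^\dagger\!\left(w^\star-\tfrac12 X^\dagger p\right)=X^\dagger(AX^\dagger)^\dagger\!\left(\tfrac12 AT^\dagger p+b\right)-\tfrac12 T^\dagger p$, the asserted formula.

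The main obstacle is exactly the closed-range issue: unlike the positive definite case (Lemma \ref{l1}), where $T^{-1/2}$ is invertible and $\mathcal{R}(AT^{-1/2})=\mathcal{R}(A)$ automatically, here $X^\dagger$ has a nontrivial kernel, so $AX^\dagger$ need not have closed range and $(AX^\dagger)^\dagger$ need not be bounded; this is why one assumes $P_{A^*}P_T$ has closed range and passes through Proposition \ref{pr1}. A secondary point requiring care is the interplay between "restricted to $\mathcal{R}(T)$" and "global": one must check that the completion-of-square variable $w$ stays in $\mathcal{R}(T)$ and that the global minimal-norm solution $(AX^\dagger)^\dagger c$ already lies in $\mathcal{R}(T)$, so that no separate minimisation over $\mathcal{R}(T)$ is needed — both of which follow from $\mathcal{R}(X^\dagger)=\mathcal{R}(T)$ together with $X^\dagger=X^\dagger P_T$.
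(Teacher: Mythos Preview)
Your proof is correct and follows essentially the same route as the paper: both complete the square via the substitution $y=Xx+\tfrac12 X^\dagger p$ (your $w$), reduce to a minimal-norm least-squares problem $AX^\dagger y=b+\tfrac12 AT^\dagger p$, invoke Izumino's criterion to ensure $AX^\dagger$ has closed range via $A^\dagger A\,X^\dagger(X^\dagger)^\dagger=P_{A^*}P_T$, take $\hat y=(AX^\dagger)^\dagger c$, and translate back. Your write-up is somewhat more careful than the paper's in explicitly checking that the minimal-norm solution $w^\star$ lands in $\mathcal{R}(T)$ and in flagging the implicit consistency assumption $S\cap\mathcal{N}(T)^\perp\neq\emptyset$, but the underlying argument is the same.
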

\begin{proof}: Since we will restrict the minimization for all vectors $x \in
\mathcal{N}(T)^\perp$ we have that $\langle x,p\rangle = \langle
x,p_1 \rangle$, where $p_1 = P_{\mathcal{R}(T)}p$ for all vectors $p
\in \mathcal{H}$ , according to the decomposition $ p = p_1 + p_2
\in
 \mathcal{R}(T)\oplus \mathcal{N}(T). $\\Let $x, a, p \in \mathcal{H}$ with $a$ and
$p$  arbitrary and not a function of $T$. Hence, the vector
$\hat{x}$ that minimizes $\Phi(x)$ also minimizes
$$\Psi(x) = \langle Tx,x\rangle + \langle x,p\rangle
+\frac{1}{4}\langle T^\dagger p,p\rangle = \langle Tx,x\rangle +
\langle x,p_1\rangle +\frac{1}{4}\langle T^\dagger p_1,p_1\rangle$$
We can easily see that
$$
\parallel Xx + \frac{1}{2}X^\dagger p_1 \parallel ^2 = \Psi(x)$$ and
that $$ X = U(R\oplus 0)U^* , \qquad X^\dagger = U(R^{-1}\oplus
0)U^*$$ Set $y = Xx + \frac{1}{2}X^\dagger p_1 $ , which implies
that $y \in \mathcal{R}(X) = \mathcal{R}(T)$.\\We have that $$Xx = y
-\frac{1}{2}X^\dagger p_1  \Leftrightarrow U(R\oplus 0)U^* x = y
-\frac{1}{2}U(R^{-1}\oplus 0)U^* p_1$$\\ Hence, $$x = U(R^{-1}\oplus
0)U^*y -\frac{1}{2}U(R^{-1}\oplus 0)(R^{-1}\oplus 0)U^* p_1$$ and
so, since $y , p_1 \in \mathcal{R}(T)$ $$ x = X^\dagger y
-\frac{1}{2}T^\dagger p_1,\qquad with \qquad  x\in
\mathcal{R}(T^\dagger)=\mathcal{R}(T) = \mathcal{N}(T)^\perp$$ Since
$Ax = b$, we have that $AX^\dagger y = b + \frac{1}{2}AT^\dagger
p_1$ and therefore, the minimal norm solution is $$\hat{y}  =
(AX^\dagger)^\dagger (b + \frac{1}{2}AT^\dagger p_1)$$By
substitution, we have that $$\hat{x}= X^\dagger(AX^\dagger)^\dagger
(\frac{1}{2}AT^\dagger p_1 + b) - \frac{1}{2} T^\dagger p_1$$ and
since $T^\dagger p =T^\dagger p_1$ for all $p\in \mathcal{H}
\Rightarrow \hat{x}=X^\dagger(AX^\dagger)^\dagger
(\frac{1}{2}AT^\dagger p + b) - \frac{1}{2} T^\dagger p $
\\The only thing that needs to be proved is the fact that the
operator $AX^\dagger$ has closed range and so its Moore-Penrose
Inverse is bounded. Since the two operators $A$ and $X^\dagger$ are
arbitrary, one does not expect that the range of their product will
always be closed. From Proposition \ref{pr1}, this is equivalent to
the fact that the operator $P_{A^*}P_T$ has closed range because
$$A^\dagger AX^\dagger( X^\dagger)^\dagger = A^\dagger AX^\dagger X = A^\dagger A U(R^{-1}\oplus 0)(R\oplus 0)U^* = P_{A^*}P_T$$ and the
proof is completed.
\end{proof}
In the sequel, we present an example which clarifies Theorem
\ref{th4}. In addition, the difference between the proposed
minimization ($x \in \mathcal{N}(T)^\perp$) and the minimization for
all $x \in \mathcal{H}$ is clearly indicated.
\begin{example}
Let $\mathcal{H} = \mathbb{R}^3$, and the positive semidefinite
matrix $$Q = \left[ {\begin{array}{ccc}
   26 & 10 & -2  \\
   10 & 8 & 2  \\
   -2 & 2 & 2
\end{array}} \right]$$ We are looking for the minimum of the functional $$f(u) = \langle u,Qu\rangle +
\langle p,u\rangle + a,\quad u\in \mathcal{N}(Q)^\perp \cap S$$ with
$ p = (1, 2, 3)^T, a =(0,0,0)^T $ and the set of constraints S is
defined as $$S= \{( x,y,z) : 3x+ y +z = -1\}.$$\\The set
$\mathcal{N}(Q)^\perp$ has the form $ u = (2x -3y, x, y)^T,  x, y
\in \mathbb{R}$.\\With easy computations, we can see that all
vectors $u \in \mathcal{N}(Q)^\perp$ satisfying the constraint $Au =
b$, where $ A= \left[ {\begin{array}{ccc}
   3&  1 & 1
\end{array}} \right]$ and $b = -1$, have the form
$$ u = (x, \frac{-8x -3}{5}, \frac{-7x -2}{5})^T$$ The matrices $U,
 X^\dagger$ are
$$U = \left[ {\begin{array}{ccc}
  \frac{3}{\sqrt{10}}  &  \frac{1}{\sqrt{35}} &  \frac{1}{\sqrt{14}}  \\
   0  & \frac{\sqrt{35}}{7}   & -\frac{\sqrt{14}}{7}  \\
   -\frac{1}{\sqrt{10}}  &  \frac{3}{\sqrt{35}}  &  \frac{3}{\sqrt{14}}
\end{array}} \right]\qquad  \left[ {\begin{array}{cc}
   A_1^{-1} & 0  \\
   0 & 0  \\
\end{array}} \right]=  \left[
{\begin{array}{ccc}
   0.0667    &     -0.0623    &     0  \\
   -0.063  &  0.1476      &   0  \\
  0  &  0  &  0
\end{array}} \right]$$ $$X^\dagger = U \left[ {\begin{array}{cc}
   (A_1^{-1})^{\frac{1}{2}} & 0  \\
   0 & 0  \\
\end{array}} \right] U^*= \left[ {\begin{array}{ccc}
  0.1908  &  -0.0295 &  -0.0833  \\
   -0.0295  & 0.2644   & 0.1861  \\
    -0.0833  &  0.1861  &  0.1518
\end{array}} \right]$$
\\Using theorem \ref{th4} we can see that the minimizing vector of
$f(u)$ under $$\{Au=b, u \in\mathcal{N}(Q)^\perp\}$$ is $$\hat{u} =
X^\dagger(AX^\dagger)^\dagger (\frac{1}{2}AQ^\dagger p + b) -
\frac{1}{2} Q^\dagger p = (-0.1019, -0.4369 , -0.2573)^T$$\\The
minimum value of $f(u)$ is then equal to 1.4175
\\In Figure 1 we can clearly see that the minimization of the
functional $f(u)$ for all vectors $ u\in \mathcal{N}(Q)^\perp$
having the form $u = (2x -3y, x, y), x, y \in \mathbb{R}$, belonging
to the plane $3x+ y +z= -1$, is attained.\\In Figure 2 we can see
that among all vectors belonging to $ \mathcal{N}(Q)^\perp  $
satisfying $Au = b$, having the form $ u = (x, \frac{-8x -3}{5},
\frac{-7x -2}{5})^T, x \in \mathbb{R}$, the vector $\hat{u} =
(-0.1019, -0.4369 , -0.2573)^T$ found from Theorem \ref{th4}
minimizes the functional $f(u)$ .
\begin{center}
\begin{figure}[h!] \label{f1}
\includegraphics[width=3.8 in,height=2.8 in]{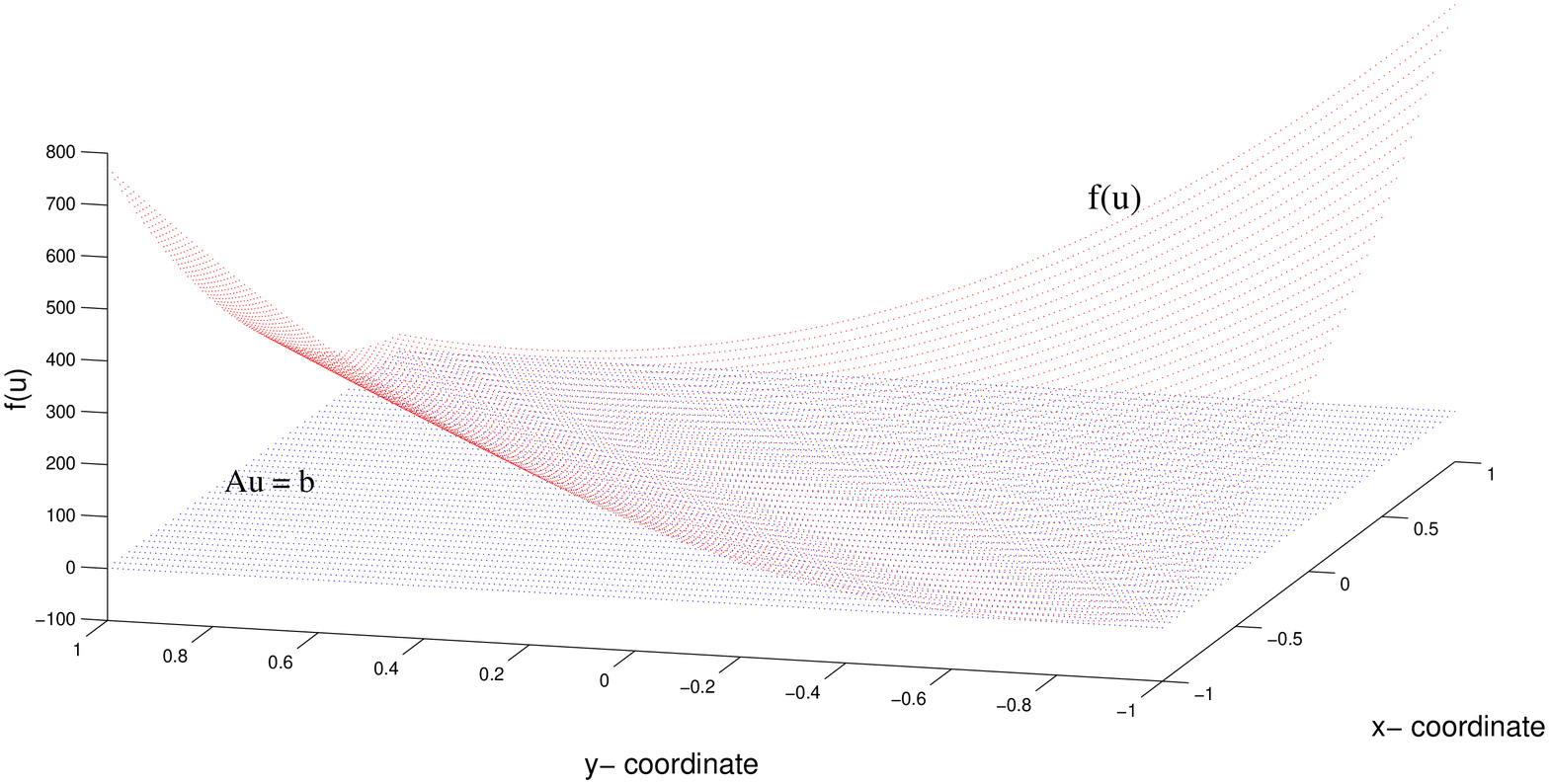}
\caption{Constrained minimization of f(u), $u
\in\mathcal{N}(Q)^\perp $ under Au = b}
\end{figure}
\end{center}
\begin{center}
\begin{figure}[h!] \label{f1}
\includegraphics[width=3.5 in,height=2.8 in]{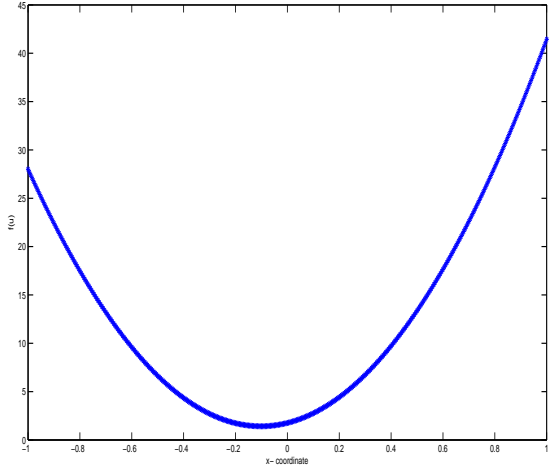}
\caption{f(u), $u \in S\cap \mathcal{N}(Q)^\perp$}
\end{figure}
\end{center}
We will also examine the minimization of $f(u)$ in the cases when $u
\in \mathcal{N}(Q)$ and when $u \in \mathbb{R}^3 $, given that $u\in
S $,  so that the difference between them is clearly indicated:
\begin{itemize}
\item[$(i)$] When the minimization takes place
for all vectors $u \in \mathcal{N}(Q) $, then this problem takes the
form : minimize $ f(u) = \langle p,u\rangle ,  u\in S $.\\We can see
with easy calculations, that the only vector $u \in
\mathcal{N}(Q)\cap S$ is $\tilde{u} = (0.25, -0.5, 0.75)^T$. In this
case , $f(\tilde{u}) = \langle p,\tilde{u}\rangle  = 1.5$
\item[$(ii)$] When the minimization takes place for all vectors $u \in
\mathbb{R}^3 , u\in S $, then the minimizing vector of $f(u)$ is $w
= (-0.1562, 0.3958, -0.9271)^T$ and the minimum value of $f(u)$ is
$f(w) = -1.8229$.
\end{itemize}
\end{example}
\begin{corollary}
We can easily see that, in the case when the vectors $p, a$ are both
equal to zero,  the functional is a constrained quadratic form
$\Phi(x) = \langle Tx,x\rangle$ under $Ax = b$.\\In this case, the
minimizing vector $\hat{u}$ belonging to $\mathcal{N}(T)^\perp  $ is
then equal to $\hat{u}=X^\dagger(AX^\dagger)^\dagger b $ as it was
discussed and proved in \cite{pap2}.
\end{corollary}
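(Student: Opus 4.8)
The plan is to obtain this corollary as a direct specialization of Theorem \ref{th4} to the case $p=0$, $a=0$. First I would note that setting $p=0$ reduces the affine--quadratic functional $\Phi(x)=\langle x,Tx\rangle+\langle p,x\rangle+a$ to the pure quadratic form $\langle Tx,x\rangle$, while the feasible set $S\cap\mathcal{N}(T)^\perp=\{x:Ax=b,\ x\in\mathcal{R}(T)\}$ is left unchanged. Since all hypotheses of Theorem \ref{th4} remain in force --- in particular the closed--range assumption on $P_{A^*}P_T$ that guarantees a bounded $(AX^\dagger)^\dagger$ --- the minimizer is still given by the formula of that theorem, so it suffices to evaluate that formula at $p=0$.

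Carrying out the substitution, both $p$--dependent contributions vanish: the inner term $\frac{1}{2}AT^\dagger p$ becomes zero, so the argument of $(AX^\dagger)^\dagger$ collapses to $b$, and the subtracted correction $\frac{1}{2}T^\dagger p$ disappears as well. This leaves $\hat{u}=X^\dagger(AX^\dagger)^\dagger b$, which is precisely the asserted formula and matches the result obtained in \cite{pap2}.

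Alternatively, I would rederive the formula from scratch by retracing the proof of Theorem \ref{th4} with $p_1=P_{\mathcal{R}(T)}p=0$. In that case the auxiliary functional simplifies to $\Psi(x)=\langle Tx,x\rangle=\|Xx\|^2$, so the natural change of variables is simply $y=Xx$ with $y\in\mathcal{R}(X)=\mathcal{R}(T)$. The constraint $Ax=b$ together with $x=X^\dagger y$ then reads $AX^\dagger y=b$, whose minimal--norm solution is $\hat{y}=(AX^\dagger)^\dagger b$ by Propositions \ref{p1} and \ref{p2}; back--substituting yields $\hat{u}=X^\dagger\hat{y}=X^\dagger(AX^\dagger)^\dagger b$.

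There is no substantive obstacle, since the statement is a limiting case of an already established theorem. The only points requiring a line of care are to confirm that minimizing $\|Xx+\frac{1}{2}X^\dagger p_1\|^2$ genuinely reduces to minimizing $\|Xx\|^2$ when $p_1=0$, and that the closed--range condition on $P_{A^*}P_T$ --- which is what makes $(AX^\dagger)^\dagger$ well defined and bounded --- is inherited unchanged from the theorem's hypotheses.
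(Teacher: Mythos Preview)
Your proposal is correct and follows exactly the approach the paper takes: the corollary is stated without a separate proof, relying on the phrase ``We can easily see that'' to indicate that it is obtained by setting $p=0$ and $a=0$ in the formula of Theorem~\ref{th4}. Your explicit substitution showing that both $p$-dependent terms vanish, together with your alternative retracing of the proof of Theorem~\ref{th4} with $p_1=0$, simply spells out what the paper leaves implicit.
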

\section{Conclusions}
In this work, we propose a constrained minimization in the case of a
quadratic functional related to a positive semidefinite operator.
The proposed minimization takes place for all vectors perpendicular
to the kernel of the corresponding operator. This proposed
constrained minimization method has the advantage of a unique
solution and is easy to implement. Practical importance of this
result in  the finite dimensional case, can be in numerous
applications such as network analysis,  filter design, spectral
analysis, direction finding etc. In many of these cases, the
knowledge of the non zero part of the solution corresponding to the
matrix may be of importance.

\end{document}